\documentclass[10pt]{amsart}
\usepackage{amsmath,amscd}
\usepackage{amsbsy}
\usepackage{amssymb}
\usepackage{amscd,amsthm}
\usepackage[all,cmtip]{xy}

\newtheorem{thm}{Theorem}
%\numberwithin{thm}{section}
\newtheorem{lem}[thm]{Lemma}
\newtheorem{prop}[thm]{Proposition}
\newtheorem{cor}[thm]{Corollary}

\newtheorem{rema}[thm]{Remark}

\begin{document}
\begin{center}
\huge{Ulrich bundles on some twisted flags}\\[1cm]
\end{center}

\begin{center}

\large{Sa$\mathrm{\check{s}}$a Novakovi$\mathrm{\acute{c}}$}\\[0,4cm]
%\small{\emph{To B. with love}}\\[0,6cm]
{\small August 2018}\\[0,3cm]
\end{center}

\noindent{\small \textbf{Abstract}. 
In this note we prove that certain twisted flag varieties carry Ulrich bundles.\\

%\begin{center}
%\tableofcontents
%\end{center}
Let $X\subset \mathbb{P}^N$ be a projective variety of dimension $d$. An \emph{Ulrich bundle} on $X$ is a vector bundle $\mathcal{E}$ satisfying $H^i(X,\mathcal{E}(-l))=0$ for any $i\in\mathbb{Z}$ and $1\leq l\leq d$. This notion was introduced in \cite{ESW} where the authors ask whether every projective variety admits an Ulrich bundle. The answer is known in a few cases: curves and Veronese varieties \cite{ESW}, \cite{H}, complete intersections \cite{BHU1}, generic linear determinantal varieties \cite{BHU}, Segre varieties \cite{CMRP}, rational normal scrolls \cite{MR}, \cite{FAN}, Grassmannians \cite{CMR}, some flag varieties \cite{CMR}, \cite{CHW}, some isotropic Grassmannians \cite{FO}, $\mathrm{K}$3 surfaces \cite{AFO}, \cite{FA}, abelian surfaces \cite{BE}, Enriques surfaces \cite{BN}, ruled surfaces \cite{ACMR} and Brauer--Severi varieties \cite{N1}. We refer to \cite{B} for an introduction to this subject. In this short note we prove that the existence of Ulrich bundles on certain twisted flags of type $A_n, B_n, C_n$ and $D_n$ follows easily from known results for partial flags and descent theory.\\

We recall the definition and some facts on twisted flags and refer to \cite{MPW} for details. Let $G$ be a semisimple algebraic group over a field $k$ and $G_s=G\otimes_k k^{sep}$. For a parabolic subgroup $P$ of $G_s$, one has a homogeneous variety $G_s/P$. A \emph{twisted flag} is variety $X$ such that $X\otimes_k k^{sep}$ is $G_s$-isomorphic to $G_s/P$ for some $G$ and some parabolic $P$ in $G_s$. Any twisted flag is smooth, absolutely irreducible and reduced. An algebraic group $G'$ is called twisted form of $G$ iff $G'_s\simeq G_s$ iff $G'={_\gamma} G$ for some $\gamma\in Z^1(k,\mathrm{Aut}(G_s))$. The group $G'$ is called an \emph{inner form} of $G$ if there is a $\delta\in Z^1(k,\bar{G}(k^{sep}))$ with $G'={_\delta}G$. Here $\bar{G}=G/Z(G)$. For an arbitrary semisimple $G$ over $k$, there is a unique (up to isomorphism) split semisimple group $G^d$ such that $G_s\simeq G_s^d$. If $G$ is an inner form of $G^d$, then $G$ is said to be of \emph{inner type}. For instance, let $A$ be a central simple algebra over $k$ of degree $n$ and $G=\mathrm{PGL}_1(A)$, then $G_s\simeq \mathrm{PGL}_n$ over $k^{sep}$. Hence $G$ is an inner form of $\mathrm{PGL}_n$. Since $\mathrm{PGL}_n$ is split, $G=\mathrm{PGL}_1(A)$ is of inner type. The inner twisted forms arising from $G=\mathrm{PGL}_1(A)$ can be described very explicitly (see \cite{MPW}, Section 5). One of these inner twisted forms is the \emph{generalized Brauer--Severi variety}. %If $G$ is an inner form of $G^d$, then $G$ is said to be of \emph{inner type}. 
So let $m\leq n$. The generalized Brauer--Severi variety $\mathrm{BS}(m,A)$ is defined to be the subset of $\mathrm{Grass}_k(mn,A)$ consisting of those subspaces of $A$ which are right ideals of dimension $m\cdot n$ (see \cite{KNU} or \cite{BLA}). Recall that $\mathrm{Grass}_k(mn,A)$ is given the structure of a projective variety via the Pl\"ucker embedding (see \cite{BLA})
\begin{eqnarray*}
\mathrm{Grass}_k(mn,A)\longrightarrow \mathbb{P}(\wedge^{mn}(A)).
\end{eqnarray*}
This gives an embedding $\mathrm{BS}(m,A)\rightarrow \mathbb{P}(\wedge^{mn}(A))$ and a very ample line bundle $\mathcal{M}$.
Note that for any $\mathrm{BS}(m,A)$ there exists a finite Galois field extension $E$ such that $\mathrm{BS}(m,A)\otimes_k E\simeq \mathrm{Grass}_E(mn,n^2)\simeq \mathrm{Grass}_E(m,n)$. The Picard group $\mathrm{Pic}(\mathrm{Grass}_E(m,n))$ is isomorphic to $\mathbb{Z}$ and has ample generator $\mathcal{O}(1)\simeq \mathrm{det}(\mathcal{Q})$ with $\mathcal{Q}$ being the universal quotient bundle on $\mathrm{Grass}_E(m,n)$. Recall that $\mathrm{Pic}(\mathrm{BS}(m,A))\simeq\mathbb{Z}$ and that there is a positive generator $\mathcal{L}$ such that $\mathcal{L}\otimes_k E\simeq \mathcal{O}(r)$ for a suitable $r>0$. Since $\mathrm{Pic}(\mathrm{BS}(m,A))$ is cyclic, we have $\mathcal{L}^{\otimes s}\simeq \mathcal{M}$ for a suitable $s>0$. Therefore, $\mathcal{L}$ is ample. From the definition of $\mathrm{BS}(m,A)$ it is clear that $\mathcal{L}$ is also very ample. %After base change to $E$ the above Pl\"ucker embedding becomes
%\begin{eqnarray*}
%\mathrm{Grass}_E(mn,n^2)\longrightarrow \mathbb{P}(\wedge^{mn}(V)),
%\end{eqnarray*}
%where $V=A\otimes_k E$ is considered as $E$ vector space of dimension $n^2$. 
%The generalized Brauer--Severi variety $\mathrm{BS}(m,A)$ is an inner twisted form of the Grassmannian (see \cite{MPW} for details on inner and outer twisted forms). 
\begin{prop}
Let $X=\mathrm{BS}(m,A)$ be a genaralized Brauer--Severi variety over a field $k$ of characteristic zero and denote by $\mathcal{L}$ the very ample generator of $\mathrm{Pic}(X)$. Then $(X, \mathcal{L}^{\otimes d})$ carries an Ulrich bundle for all $d\geq 1$. %Moreover, $\mathrm{BS}(m,A)$ carries $A^*$-invariant Ulrich bundles.
\end{prop}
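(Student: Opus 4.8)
The plan is to reduce the assertion to the known existence of Ulrich bundles on ordinary Grassmannians over a splitting field, and then to descend. By the facts recalled above there is a finite Galois extension $E/k$ with $X_E:=X\otimes_k E\simeq\mathrm{Grass}_E(m,n)$ (where $n=\deg A$), and under this identification $\mathcal L\otimes_k E\simeq\mathcal O(r)$, hence $\mathcal L^{\otimes d}\otimes_k E\simeq\mathcal O(rd)$. Since cohomology commutes with the flat base change $\mathrm{Spec}\,E\to\mathrm{Spec}\,k$ and $\dim X_E=\dim X$, a vector bundle $\mathcal E$ on $X$ is Ulrich for $(X,\mathcal L^{\otimes d})$ if and only if $\mathcal E\otimes_k E$ is Ulrich for $(\mathrm{Grass}_E(m,n),\mathcal O(rd))$. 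So it suffices to exhibit a vector bundle on $X$ whose base change to $E$ is an Ulrich bundle on $(\mathrm{Grass}_E(m,n),\mathcal O(rd))$.

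For this I would invoke \cite{CMR}: the Grassmannian $\mathrm{Grass}_E(m,n)$ carries an Ulrich bundle with respect to every positive multiple of the Pl\"ucker polarization, in particular with respect to $\mathcal O(rd)$, and the bundle produced there, call it $\mathcal F$, is homogeneous, i.e. built functorially from the universal sub- and quotient bundles $\mathcal U$, $\mathcal Q$ and the line bundle $\mathcal O(1)$. Because $X=\mathrm{BS}(m,A)$ is a twisted form of \emph{inner} type --- it arises from $G=\mathrm{PGL}_1(A)$ --- the Galois action on $X_E$ is induced by $\sigma$-semilinear automorphisms of the ambient $n$-dimensional space, and hence preserves the universal bundles: $\sigma^{*}\mathcal U\cong\mathcal U$ and $\sigma^{*}\mathcal Q\cong\mathcal Q$ for all $\sigma\in\mathrm{Gal}(E/k)$. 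Consequently $\sigma^{*}\mathcal F\cong\mathcal F$, i.e. $\mathcal F$ is Galois-invariant up to isomorphism --- precisely the situation in which descent may be obstructed, but only through the Brauer group.

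The heart of the matter is then to descend $\mathcal F$, possibly after an innocuous modification. I would decompose $\mathcal F=\bigoplus_i\mathcal F_i^{\oplus m_i}$ into pairwise non-isomorphic homogeneous summands; each $\mathcal F_i$ is again Galois-invariant up to isomorphism, and the obstruction to equipping $\mathcal F_i$ with a descent datum is a class $c_i\,[A]\in\mathrm{Br}(k)$, where $c_i\in\mathbb Z$ records the action of the center on $\mathcal F_i$ measured against the Pl\"ucker twist. Writing $\mathcal F_i^{\oplus N}\cong\mathcal F_i\otimes_E E^{N}$ and twisting the descent datum on the multiplicity space $E^{N}$, one finds that $\mathcal F_i^{\oplus N}$ descends precisely when $\mathrm{ind}(A^{\otimes c_i})\mid N$; since $\mathrm{ind}(A^{\otimes c_i})\mid\mathrm{ind}(A)$, the bundle $\mathcal F^{\oplus\mathrm{ind}(A)}=\bigoplus_i\mathcal F_i^{\oplus\,\mathrm{ind}(A)\,m_i}$ carries a descent datum and hence comes from a vector bundle $\mathcal E$ on $X$. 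A finite direct sum of copies of an Ulrich bundle is Ulrich, so $\mathcal E\otimes_k E\cong\mathcal F^{\oplus\mathrm{ind}(A)}$ is Ulrich for $(\mathrm{Grass}_E(m,n),\mathcal O(rd))$, and therefore $\mathcal E$ is an Ulrich bundle on $(X,\mathcal L^{\otimes d})$ by the first paragraph.

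The genuinely delicate step is this last one: pinning down the Brauer-theoretic obstruction attached to the homogeneous constituents of the Ulrich bundle of \cite{CMR} and checking that it is annihilated after passing to a sufficiently divisible direct power (equivalently, that a big enough multiple of $\mathcal F$ lies in the essential image of base change from vector bundles on $X$). This is where the hypothesis that $X$ is of inner type, together with the explicit description of $\mathrm{BS}(m,A)$ and of $\mathrm{Pic}(\mathrm{BS}(m,A))\simeq\mathbb Z$, enters in an essential way. The reduction in the first paragraph and the input over $E$ are formal.
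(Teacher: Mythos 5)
Your first paragraph (Ulrich-ness can be checked after the flat base change to $E$) is fine, but the rest of the argument has a genuine gap at its very first input. You need a \emph{Galois-invariant, homogeneous} Ulrich bundle on $\mathrm{Grass}_E(m,n)$ with respect to $\mathcal{O}(rd)$, and you cite \cite{CMR} as providing homogeneous Ulrich bundles ``with respect to every positive multiple of the Pl\"ucker polarization.'' That is not what \cite{CMR} proves: Theorem 3.6 there produces Schur-functor (hence $\mathrm{GL}$-invariant) Ulrich bundles only for the Pl\"ucker line bundle $\mathcal{O}(1)$. The case $rd>1$ is unavoidable here, since already for $d=1$ the generator $\mathcal{L}$ of $\mathrm{Pic}(X)$ pulls back to $\mathcal{O}(r)$ with $r$ possibly $>1$. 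The only available existence statement for higher multiples is \cite{B}, Corollary 3.2, whose construction (a finite linear projection combined with Ulrich modules for Veronese embeddings) is in no evident way homogeneous or Galois-equivariant. Since your whole descent mechanism requires $\sigma^*\mathcal{F}\cong\mathcal{F}$ and a decomposition of $\mathcal{F}$ into simple homogeneous summands whose descent obstructions you can identify as $c_i[A]$, the argument has nothing guaranteed to run on in exactly the case that matters. Moreover the Brauer-theoretic bookkeeping itself (that the obstruction of $\mathcal{F}_i$ is $c_i[A]$, and that $\mathcal{F}_i^{\oplus N}$ descends when $\mathrm{ind}(A^{\otimes c_i})\mid N$) is asserted rather than proved; as you note, this is the delicate part, and it is the kind of analysis carried out in \cite{N1} for $m=1$, not something that can be waved through.

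The paper's proof shows this machinery is unnecessary: it avoids descent entirely. One takes the \cite{CMR} Ulrich bundle for $(\mathbb{G},\mathcal{O}_{\mathbb{G}}(1))$ over $E$ (defined over $E$ because Schur functors commute with base change, Ulrich checked after extension to $k^{sep}$), passes to an Ulrich bundle $\mathcal{E}$ for $(\mathbb{G},\mathcal{O}_{\mathbb{G}}(rd))$ by \cite{B}, Corollary 3.2, and then pushes forward along the finite surjective projection $\pi\colon X\otimes_k E\to X$. Since $\pi^*(\mathcal{L}^{\otimes d})\simeq\mathcal{O}_{\mathbb{G}}(rd)$, \cite{B}, (3.6) gives at once that $\pi_*\mathcal{E}$ is Ulrich for $(X,\mathcal{L}^{\otimes d})$; no Galois invariance of the bundle upstairs, and no Brauer obstruction analysis, is needed. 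If you want to pursue your descent route (which, if completed, would give finer information such as rank bounds in terms of $\mathrm{ind}(A)$), you must first either construct equivariant Ulrich bundles for the higher polarizations $\mathcal{O}(rd)$ or find another source of Galois-invariant ones; as written, that step is missing.
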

\begin{proof}
There is a finite Galois field extension $E$ of $k$ such that $X\otimes_k E$ is isomorphic to the Grassmannian $\mathrm{Grass}_E(mn,n^2)$. Consider the projection map $\pi\colon X\otimes_k E\rightarrow X$. Note that $\pi$ is finite and surjective. Let $k^{sep}$ be a separable closure containing $E$. According to \cite{CMR}, Theorem 3.6 there is an Ulrich bundle $\mathcal{F}$ on $X\otimes_k k^{sep}\simeq \mathrm{Grass}_{k^{sep}}(mn,n^2)$ if it is embedded via the Pl\"ucker embedding, or equivalently, if it is embedded via $\mathcal{O}_{\mathrm{Grass}_{k^{sep}}(mn,n^2)}(1)$. This Ulrich bundle is obtained in the following way: Let 
\begin{eqnarray*}
0\longrightarrow \mathcal{S}^{\vee}\longrightarrow \mathcal{O}_{\mathrm{Grass}_{k^{sep}}(mn,n^2)}^{\oplus n^2}\longrightarrow \mathcal{Q}\longrightarrow 0
\end{eqnarray*}
be the universal exact sequence. Then $\mathcal{F}$ is given by $\Sigma^{\lambda}(\mathcal{S}^{\vee})\otimes \Sigma^{\beta}\mathcal{Q}$ for suitable partitions $\lambda$ and $\beta$. Since $\mathcal{S}^{\vee}$ and $\mathcal{Q}$ are also defined on $X\otimes_k E\simeq \mathrm{Grass}_E(mn,n^2)=:\mathbb{G}$ and since the Schur functor $\Sigma$ commutes with base change, the vector bundle $\mathcal{F}$ is also defined over $\mathbb{G}$. In other words, there is a bundle $\mathcal{F}'$ on $\mathbb{G}$ such that $\mathcal{F}'\otimes_E k^{sep}\simeq \mathcal{F}$. If we embed $\mathbb{G}$ via the Pl\"ucker embedding, i.e via $\mathcal{O}_{\mathbb{G}}(1)$, we get $H^i(X,\mathcal{F}'(-l))=0$ for any $i\in\mathbb{Z}$ and $1\leq l\leq \mathrm{dim}(X)$. In fact, this follows from base change to $k^{sep}$. Therefore, $\mathcal{F}'$ is an Ulrich bundle for $(\mathbb{G},\mathcal{O}_{\mathbb{G}}(1))$. Now we use \cite{B}, Corollary 3.2 to see that $(\mathbb{G},\mathcal{O}_{\mathbb{G}}(d))$ carries an Ulrich bundle for all $d>1$. Note that there is a line bundle $\mathcal{O}_{\mathbb{G}}(r)$ on $\mathbb{G}$ such that $\mathcal{L}\otimes_k E\simeq \mathcal{O}_{\mathbb{G}}(r)$. Since there is an Ulrich bundle $\mathcal{E}$ for $(\mathbb{G},\mathcal{O}_{\mathbb{G}}(rd))$ for all $d\geq 1$, we can apply \cite{B}, (3.6) to conclude that $\pi_*\mathcal{E}$ is an Ulrich bundle for $(X, \mathcal{L}^{\otimes d})$. 
\end{proof}
%For the second statement, recall that on the generalized Brauer--Severi variety $\mathrm{BS}(m,A)$ there is the universal exact sequence
%\begin{eqnarray*}
%0\longrightarrow \mathcal{S}^{\vee}\longrightarrow \mathcal{O}^{\oplus n^2}_{\mathrm{BS}(m,A)}\longrightarrow \mathcal{Q}\longrightarrow 0.
%\end{eqnarray*}
%In \cite{LS} it is shown that for any partition $\lambda=(\lambda_1,...,\lambda_m)$, $0\leq \lambda_i\leq n-m$, the vector bundles $\Sigma^{\lambda}(\mathcal{S}^{\vee})^{\oplus n\cdot |\lambda|}$ and $\Sigma^{\lambda^*}(\mathcal{Q}^{\vee})^{\oplus n\cdot |\lambda|}$ descent to vector bundles $\mathcal{N}_{\lambda}$ and $\mathcal{M}_{\lambda}$. Using \cite{N}, Proposition 3.4, it follows that $\Sigma^{\lambda}(\mathcal{S}^{\vee})^{\oplus n\cdot |\lambda|}$ and $\Sigma^{\lambda^*}(\mathcal{Q}^{\vee})^{\oplus n\cdot |\lambda|}$ uniquely descent to $\mathcal{N}_{\lambda}$ and $\mathcal{M}_{\lambda}$. Here $\lambda^*$ denotes the conjugate of $\lambda$. Note that $\Sigma^{\lambda^*}(\mathcal{Q}^{\vee})^{\oplus n\cdot |\lambda|}\simeq \Sigma^{-\lambda^*}(\mathcal{Q})^{\oplus n\cdot |\lambda|}$. Now we conclude from \cite{CMR}, Theorem 3.6 that for certain partitions $\alpha$ and $\beta$, the vector bundles $\mathcal{N}_{\alpha}\otimes \mathcal{M}_{-\beta}$ are Ulrich bundles on $BS(m,A)$ since they base change to $GL(V)$-invariant Ulrich bundles on $\mathrm{Grass}_E(mn,n^2)$. This completes the proof.
\begin{rema}
\textnormal{The twisted Grassmannian $\mathrm{BS}(m,A)$ embedded into projective space by the very ample generator $\mathcal{L}$ cannot carry an Ulrich line bundle. Recall that $k^{sep}$ splits $A$, i.e $A\otimes_k k^{sep}\simeq M_n(k^{sep})$. In fact, if $\mathcal{G}$ is an Ulrich line bundle, the base change $\mathcal{G}\otimes_k k^{sep}$ would give an Ulrich line bundle on $\mathbb{G}:=\mathrm{Grass}_{k^{sep}}(mn,n^2)$ with respect to $\mathcal{L}\otimes_k k^{sep}\simeq \mathcal{O}_{\mathbb{G}}(r)$ for a suitable $r>1$. From \cite{CHW}, Proposition 2.1 we conclude that the degree of $\mathbb{G}$ is $>1$. Since $\mathrm{Pic}(\mathbb{G})$ is generated by $\mathcal{O}_{\mathbb{G}}(1)$, there cannot exist an Ulrich bundle of rank one (see \cite{B}, Section 4 for an explanation). So, as in the case of ordinary Brauer--Severi varieties, there are no Ulrich line bundles on generalized ones.}
\end{rema}
\begin{rema}
\textnormal{Proposition 1 from above concerns the existence of Ulrich bundles on $\mathrm{BS}(m,A)$. It would be interesting to relate the minimal rank of such a bundle to the invariants period and index of $A$. In the case of ordinary Brauer--Severi varieties, i.e $m=1$ this is done in \cite{N1}. Moreover, we wonder whether the obtained Ulrich bundles from Proposition 1 remain Ulrich in positive characteristic.}
\end{rema}
In general, the inner twisted flags arising from $G=\mathrm{PGL_1}(A)$, where $A$ is a central simple algebra of degree $n$, are varieties denoted by $\mathrm{BS}(n_1,...,n_l,A)$, $n_1<\cdots<n_l<n$, satisfying $\mathrm{BS}(n_1,...,n_l,A)\otimes_k k^s\simeq \mathrm{Flag}_{k^s}(n_1,...,n_l;n)$. These partial twisted flags parametrize sequences $I_1\subseteq\cdots \subset I_l\subseteq A$ of right ideals with $\mathrm{dim}(I_j)=n\cdot n_j$, for $j=1,...,l$ (see \cite{MPW}, Section 5). If $E$ is a splitting field of $A$, i.e $A\otimes_k E\simeq M_n(E)$, one has $\mathrm{BS}(n_1,...,n_l,A)\otimes_k E\simeq \mathrm{Flag}_E(n_1,...,n_l;n)$. For details, we refer to \cite{KNU} and \cite{MPW}. Recall that a flag $\mathbb{F}_L:=\mathrm{Flag}_L(n_1,...,n_l;n)$ over a field $L$ has $l$ projections $p_i\colon \mathrm{Flag}_L(n_1,...,n_l;n)\longrightarrow \mathrm{Grass}_L(l_i,n)$. The Picard group of $\mathbb{F}_L$ is generated by $\mathcal{L}_i=p_i^*\mathcal{O}_{\mathrm{Grass}_{L}(l_i,n)}(1)$. A line bundle $\mathcal{R}$ on $\mathbb{F}_L$ is ample if and only if $\mathcal{R}=\mathcal{L}_1^{\otimes a_1}\otimes\cdots\otimes\mathcal{L}_l^{\otimes a_l}$ with $a_i>0$. We set $\mathcal{O}_{\mathbb{F}_L}(1)=\mathcal{L}_1\otimes\cdots\otimes\mathcal{L}_l$.  
\begin{lem}
Let $X=\mathrm{BS}(n_1,...,n_l,A)$ be a twisted flag as above and $E$ a finite splitting field of $A$. Then there is a very ample line bundle $\mathcal{L}$ on $X$ such that $\mathcal{L}\otimes_k E\simeq \mathcal{O}_{\mathbb{F}_E}(r)$ for a suitable $r>0$.
\end{lem}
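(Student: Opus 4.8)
The plan is to produce $\mathcal{L}$ by hand as the pullback of an ample line bundle on a product of projective spaces under a product of generalized Brauer--Severi projections, and then to identify its base change to $E$ by a Morita-theoretic computation.

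First I would consider, for each $j$, the natural $k$-morphism $q_j\colon X=\mathrm{BS}(n_1,\dots,n_l,A)\to\mathrm{BS}(n_j,A)$ sending a flag of right ideals $I_1\subseteq\cdots\subseteq I_l$ to its $j$-th term $I_j$; this is defined over $k$, and after $-\otimes_k E$, under the canonical identifications $X\otimes_k E\simeq\mathbb{F}_E$ and $\mathrm{BS}(n_j,A)\otimes_k E\simeq\mathrm{Grass}_E(n_j,n)$, it becomes the standard projection $p_j$. Composing the product $q=(q_1,\dots,q_l)$ with the Pl\"ucker embeddings $\mathrm{BS}(n_j,A)\subseteq\mathrm{Grass}_k(nn_j,A)\hookrightarrow\mathbb{P}_k(\wedge^{nn_j}A)$ yields a morphism $\iota\colon X\to\mathbb{P}:=\prod_{j=1}^l\mathbb{P}_k(\wedge^{nn_j}A)$. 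After base change to $E$, the morphism $\iota_E$ factors as a chain of closed immersions $\mathbb{F}_E\hookrightarrow\prod_j\mathrm{Grass}_E(n_j,n)\hookrightarrow\prod_j\mathrm{Grass}_E(nn_j,n^2)\hookrightarrow\prod_j\mathbb{P}_E(\wedge^{nn_j}(A\otimes_k E))$ (flag into product of Grassmannians, then Morita on each factor, then Pl\"ucker), hence is itself a closed immersion; since being a closed immersion descends along the faithfully flat cover $\mathrm{Spec}\,E\to\mathrm{Spec}\,k$, the map $\iota$ is already a closed immersion over $k$. Therefore $\mathcal{L}:=\iota^*\mathcal{O}_{\mathbb{P}}(1,\dots,1)$ is very ample, and by construction $\mathcal{L}\simeq\bigotimes_{j=1}^l q_j^*\mathcal{M}_j$, where $\mathcal{M}_j$ denotes the restriction to $\mathrm{BS}(n_j,A)$ of $\mathcal{O}_{\mathrm{Grass}_k(nn_j,A)}(1)$.

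It then remains to compute $\mathcal{M}_j\otimes_k E$. Over $E$ one has $A\otimes_k E\simeq M_n(E)=\mathrm{End}_E(E^n)$, and by Morita theory every right ideal of dimension $nn_j$ is $I_W=(E^n)^\vee\otimes_E W$ for a unique $n_j$-dimensional subspace $W\subseteq E^n$; this is exactly the isomorphism $\mathrm{BS}(n_j,A)\otimes_k E\simeq\mathrm{Grass}_E(n_j,n)$. Under it the tautological rank-$nn_j$ subbundle on $\mathrm{Grass}_E(nn_j,n^2)$ restricts to $(E^n)^\vee\otimes_E\mathcal{S}_{n_j}$, with $\mathcal{S}_{n_j}$ the tautological rank-$n_j$ subbundle on $\mathrm{Grass}_E(n_j,n)$ and $(E^n)^\vee$ a trivial bundle of rank $n$; taking top exterior powers and dualizing gives $\mathcal{M}_j\otimes_k E\simeq(\det\mathcal{S}_{n_j})^{\vee\otimes n}\simeq\mathcal{O}_{\mathrm{Grass}_E(n_j,n)}(n)$. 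Pulling back along $p_j$ yields $q_j^*\mathcal{M}_j\otimes_k E\simeq\mathcal{L}_j^{\otimes n}$, so $\mathcal{L}\otimes_k E\simeq\bigotimes_{j=1}^l\mathcal{L}_j^{\otimes n}=(\mathcal{L}_1\otimes\cdots\otimes\mathcal{L}_l)^{\otimes n}=\mathcal{O}_{\mathbb{F}_E}(n)$, and the lemma holds with $r=n$.

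The step I expect to require the most care is the last paragraph: making the Morita identification of $\mathrm{BS}(n_j,A)\otimes_k E$ precise enough that the determinant of the tautological ideal bundle comes out as exactly the $n$-th power of the Pl\"ucker class. This is what makes the exponent $n$ \emph{independent of} $j$, hence what guarantees that $\mathcal{L}\otimes_k E$ is an honest power of $\mathcal{O}_{\mathbb{F}_E}(1)$ rather than a mixed multidegree $\mathcal{L}_1^{\otimes a_1}\otimes\cdots\otimes\mathcal{L}_l^{\otimes a_l}$. The descent argument showing $\iota$ is a closed immersion and the compatibility $q_j\otimes_k E=p_j$ are comparatively routine, being essentially built into the construction of the twisted flag.
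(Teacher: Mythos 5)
Your proof is correct, but it takes a more explicit route than the paper. The paper also pulls back line bundles along the $l$ projections to the generalized Brauer--Severi varieties, but it uses the ample generators $\mathcal{O}_{\mathrm{BS}(n_i,A)}(1)$ of their Picard groups, whose base changes are only known to be $\mathcal{L}_i^{\otimes b_i}$ for some unspecified positive $b_i$; it then equalizes the exponents by raising each pullback to the power $c_i=m/b_i$ with $m=\mathrm{lcm}(b_i)$, so that the tensor product base-changes to $\mathcal{O}_{\mathbb{F}_E}(m)$, deduces ampleness of this bundle because ampleness can be tested after the faithfully flat base change to $E$, and finally replaces it by a large power $\mathcal{N}^{\otimes s}$ to get very ampleness. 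You instead pull back the Pl\"ucker bundles $\mathcal{M}_j$ and compute, via the Morita identification $I_W=(E^n)^\vee\otimes_E W$, that each $\mathcal{M}_j\otimes_k E\simeq\mathcal{O}_{\mathrm{Grass}_E(n_j,n)}(n)$ with exponent $n$ independent of $j$ (the computation $\det((E^n)^\vee\otimes\mathcal{S}_{n_j})^\vee\simeq(\det\mathcal{S}_{n_j}^\vee)^{\otimes n}$ is right, the trivial factor $(\det E^n)^{\otimes n_j}$ being harmless), which makes the lcm step unnecessary and pins down $r=n$; and you get very ampleness directly by showing that the map to the product of projective spaces is a closed immersion, checking this after base change and descending the property along $\mathrm{Spec}\,E\to\mathrm{Spec}\,k$. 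What your argument buys is an explicit value of $r$ and a genuinely very ample $\mathcal{L}$ without passing to an unspecified power $s$; what the paper's softer argument buys is independence from any Morita bookkeeping (it only needs that each $\mathcal{M}_i\otimes_k E$ is \emph{some} positive power of $\mathcal{L}_i$), and the paper additionally records in Proposition 7 a still more structural alternative via the Brauer-group exact sequence, which is the argument it reuses for the types $B_n$, $C_n$, $D_n$.
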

\begin{proof}
We give an elementary proof. A more structural argument is given in Proposition 7 below. The twisted flag $X$ has $l$ projections to generalized Brauer--Severi varieties
\begin{eqnarray*}
\pi_i\colon \mathrm{BS}(n_1,...,n_l,A)\longrightarrow \mathrm{BS}(l_i,A).
\end{eqnarray*}
Let us denote by $\mathcal{O}_{\mathrm{BS}(l_i,A)}(1)$ the ample generators of $\mathrm{Pic}(\mathrm{BS}(l_i,A))$. The line bundles $\mathcal{M}_i=\pi_i^*\mathcal{O}_{\mathrm{BS}(l_i,A)}(1)$ are in $\mathrm{Pic}(X)$ and so are the bundles 
\begin{eqnarray*}
\mathcal{M}_1^{\otimes a_1}\otimes\cdots \otimes \mathcal{M}_l^{\otimes a_l}
\end{eqnarray*}
for $a_i\in\mathbb{Z}$. Note that after base change to the splitting field $E$, we have $\mathcal{M}_i\otimes_k E\simeq \mathcal{L}_i^{\otimes b_i}$ for suitable positive $b_i\in\mathbb{Z}$. Therefore, we get 
\begin{eqnarray*}
(\mathcal{M}_1\otimes\cdots \otimes \mathcal{M}_l)\otimes_k E\simeq \mathcal{L}_1^{\otimes b_1}\otimes\cdots\otimes\mathcal{L}_l^{\otimes b_l}.
\end{eqnarray*}
Let $m$ be the least common multiple of all the $b_i$. By definition, there are positive integers $c_i$ such that $c_i\cdot b_i=m$. We set $\mathcal{N}=\mathcal{M}_1^{\otimes c_1}\otimes\cdots \otimes \mathcal{M}_l^{\otimes c_l}$ and obtain
\begin{eqnarray*}
\mathcal{N}\otimes_k E\simeq \mathcal{L}_1^{\otimes b_1\cdot c_1}\otimes\cdots\otimes\mathcal{L}_l^{\otimes b_l\cdot c_l}\simeq \mathcal{O}_{\mathbb{F}_E}(m).
\end{eqnarray*}
Since $\mathcal{O}_{\mathbb{F}_E}(m)$ is ample on $\mathbb{F}_E$, we get that $\mathcal{N}$ must be ample on $\mathrm{BS}(n_1,...,n_l,A)$. We take $\mathcal{L}$ to be $\mathcal{N}^{\otimes s}$ for a suitable large integer $s>0$. In fact, there are plenty of such very ample line bundles. This completes the proof.
\end{proof}
\begin{prop}
Let $A$ be a degree $n$ central simple algebra over a field $k$ of characteristic zero and $E$ a finite splitting field. Let $X$ be one of the following twisted flag varieties:
\begin{eqnarray*}
\mathrm{BS}(1,n-1,A),\;\; \mathrm{BS}(1,n-2,A),\;\; \mathrm{BS}(2,n-2,A),\;\; \mathrm{BS}(m,m+1,A),\;\; \mathrm{BS}(m,m+2,A).
\end{eqnarray*}
Then there are very ample line bundles $\mathcal{L}$ on $X$ satisfying $\mathcal{L}\otimes_k E\simeq \mathcal{O}_{\mathbb{F}_E}(r)$ for suitable $r>0$. Moreover, for every such very ample line bundle $(X,\mathcal{L}^{\otimes d})$ carries an Ulrich bundle for all $d\geq 1$.
\end{prop}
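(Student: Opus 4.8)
The plan is to run the argument of Proposition 1 almost verbatim, with the Grassmannian replaced by the partial flag $\mathbb{F}_E:=\mathrm{Flag}_E(n_1,\dots,n_l;n)$ and the generalized Brauer--Severi variety replaced by $X=\mathrm{BS}(n_1,\dots,n_l,A)$. The first assertion is immediate from Lemma 4: taking the integer $s$ there large enough produces very ample line bundles $\mathcal{L}=\mathcal{N}^{\otimes s}$ on $X$ with $\mathcal{L}\otimes_k E\simeq \mathcal{O}_{\mathbb{F}_E}(ms)$, so one may put $r=ms>0$. For the second assertion, fix one such $\mathcal{L}$ and write $\mathcal{L}\otimes_k E\simeq \mathcal{O}_{\mathbb{F}_E}(r)$ with $r>0$. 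Enlarging $E$ if necessary, choose a separable closure $k^{sep}\supseteq E$, so that $X\otimes_k k^{sep}\simeq \mathrm{Flag}_{k^{sep}}(n_1,\dots,n_l;n)$, and observe that the projection $\pi\colon X\otimes_k E\to X$ is finite and surjective (base change along the finite extension $E/k$).

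The crucial input is that each of the five two-step flags in the statement, over $k^{sep}$ and embedded via $\mathcal{O}(1)=\mathcal{L}_1\otimes\cdots\otimes\mathcal{L}_l$, carries an Ulrich bundle; this is precisely what is established for exactly these cases in \cite{CMR} and \cite{CHW}. Moreover, the Ulrich bundles produced there are built by applying Schur functors $\Sigma^{\lambda}$ to the tautological sub-, quotient, and intermediate bundles of the flag (the analogues of $\mathcal{S}^{\vee}$ and $\mathcal{Q}$ in the Grassmannian case). Since all these tautological bundles are already defined on $\mathbb{F}_E$ and the Schur functor commutes with base change, every such Ulrich bundle $\mathcal{F}$ on $\mathrm{Flag}_{k^{sep}}(n_1,\dots,n_l;n)$ descends to a bundle $\mathcal{F}'$ on $\mathbb{F}_E$ with $\mathcal{F}'\otimes_E k^{sep}\simeq \mathcal{F}$. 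Faithfully flat base change to $k^{sep}$ then yields $H^i(\mathbb{F}_E,\mathcal{F}'(-l))=0$ for all $i\in\mathbb{Z}$ and $1\leq l\leq \mathrm{dim}(\mathbb{F}_E)$, so $\mathcal{F}'$ is an Ulrich bundle for $(\mathbb{F}_E,\mathcal{O}_{\mathbb{F}_E}(1))$.

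It remains to transport this. By \cite{B}, Corollary 3.2, the pair $(\mathbb{F}_E,\mathcal{O}_{\mathbb{F}_E}(d))$ carries an Ulrich bundle for every $d\geq 1$; in particular, for each $d\geq 1$ there is an Ulrich bundle $\mathcal{E}$ for $(\mathbb{F}_E,\mathcal{O}_{\mathbb{F}_E}(rd))$. Because $\mathcal{L}\otimes_k E\simeq \mathcal{O}_{\mathbb{F}_E}(r)$, the bundle $\mathcal{E}$ is an Ulrich bundle for $(X\otimes_k E,(\mathcal{L}^{\otimes d})\otimes_k E)$, and applying \cite{B}, (3.6) to the finite surjective morphism $\pi$ shows that $\pi_*\mathcal{E}$ is an Ulrich bundle for $(X,\mathcal{L}^{\otimes d})$, for all $d\geq 1$.

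I expect the only genuine difficulty to be the bookkeeping in the second paragraph: one must confirm that \cite{CMR} and \cite{CHW} really do cover precisely this list of five flag types, and that in each case the Ulrich bundle constructed there is a Schur-functor twist of the universal bundles. This last point is what makes the descent from $k^{sep}$ down to $E$ (rather than merely to some auxiliary finite extension) automatic; granted it, everything else is formal and runs exactly as in Proposition 1.
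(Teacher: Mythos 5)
Your proposal is correct and follows essentially the same route as the paper: existence of $\mathcal{L}$ from Lemma 4, descent of the Schur-functor Ulrich bundles of \cite{CHW} from $k^{sep}$ to $\mathbb{F}_E$ via base change, then \cite{B}, Corollary 3.2 and \cite{B}, (3.6) applied to the finite surjective projection $\pi\colon X\otimes_k E\to X$. The only cosmetic difference is that the paper cites \cite{CHW} alone for the two-step flag cases (with the tautological quotients $\mathcal{U}_i=\mathcal{T}_i/\mathcal{T}_{i-1}$), which settles the bookkeeping concern you raise at the end.
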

\begin{proof}
The existence of very ample line bundles $\mathcal{L}$ satisfying the desired property follows from Lemma 4. After base change to some splitting field $L$ of $A$, the variety $X$ becomes one of the following flags: 
\begin{eqnarray*}
\mathrm{Flag}_L(1,n-1;n),\;\; \mathrm{Flag}_L(1,n-2;n),\;\; \mathrm{Flag}_L(2,n-2;n),\;\; \mathrm{Flag}_L(m,m+1;n),\\
\mathrm{Flag}_L(m,m+2;n).
\end{eqnarray*}
We consider the projection map $\pi\colon X\otimes_k E\rightarrow X$ which is finite and surjective. Let $k^{sep}$ be a separable closure containing $E$. Note that $k^{sep}$ is also a splitting field of $A$. According to \cite{CHW}, there is an Ulrich bundle on $X\otimes_k k^{sep}$ with respect to $\mathcal{O}_{X\otimes_k k^{sep}}(1)$. This Ulrich bundle is obtained in the following way: Let $0=\mathcal{T}_0\subset \mathcal{T}_1\subset \mathcal{T}_2\subset \mathcal{T}_3$ be the collection of tautological subbundles on $X\otimes_k k^{sep}$ and $\mathcal{U}_i=\mathcal{T}_i/\mathcal{T}_{i-1}$. Then $\Sigma^{\lambda}(\mathcal{U}_1^{\vee})\otimes\Sigma^{\alpha}(\mathcal{U}_2^{\vee})\otimes \Sigma^{\beta}(\mathcal{U}_3^{\vee})$ is an Ulrich bundle for suitable partitions $\lambda,\alpha$ and $\beta$. These bundles can also be defined over $\mathbb{F}_E=X\otimes_k E$. We argue as in the proof of Proposition 1 to conclude that there is an Ulrich bundle on $\mathbb{F}_E$ with respect to $\mathcal{O}_{\mathbb{F}_E}(1)$. Again, we use \cite{B}, Corollary 3.2 to see that $(\mathbb{F}_E,\mathcal{O}_{\mathbb{F}_E}(d))$ carries an Ulrich bundle for all $d>1$. Now take a very ample line bundle $\mathcal{L}$ on $\mathbb{F}_E$ satisfying $\mathcal{L}\otimes_k E\simeq \mathcal{O}_{\mathbb{F}_E}(r)$ for some $r>0$. Since there is an Ulrich bundle $\mathcal{E}$ for $(\mathbb{F},\mathcal{O}_{\mathbb{F}_E}(rd))$ for all $d\geq 0$, we can apply \cite{B}, (3.6) to conclude that $\pi_*\mathcal{E}$ is an Ulrich bundle for $(X, \mathcal{L}^{\otimes d})$. This proves the assertion. 
\end{proof}

\begin{thm}[Type $A_n$]
Let $A$ be a degree $n$ central simple algebra over a field $k$ of characteristic zero and $E$ a finite splitting field. Let $X$ one of the following twisted flags:
\begin{eqnarray*} 
\mathrm{BS}(m,A),\;\; \mathrm{BS}(1,n-1,A),\;\; \mathrm{BS}(1,n-2,A),\;\; \mathrm{BS}(2,n-2,A),\;\; \mathrm{BS}(m,m+1,A),\\
\mathrm{BS}(m,m+2,A).
\end{eqnarray*}
Then there are very ample line bundles $\mathcal{L}$ on $X$ satisfying $\mathcal{L}\otimes_k E\simeq \mathcal{O}_{\mathbb{F}_E}(r)$ for suitable $r>0$. Moreover, for every such very ample bundle $(X,\mathcal{L}^{\otimes d})$ carries an Ulrich bundle for all $d\geq 1$.
\end{thm}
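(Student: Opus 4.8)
The plan is to observe that this theorem is nothing more than the consolidation of Proposition 1 and Proposition 6 into a single statement, so that the proof proceeds by splitting $X$ into the two families already treated and quoting the relevant result in each. First I would dispose of the case $X=\mathrm{BS}(m,A)$, then the partial-flag cases $\mathrm{BS}(1,n-1,A)$, $\mathrm{BS}(1,n-2,A)$, $\mathrm{BS}(2,n-2,A)$, $\mathrm{BS}(m,m+1,A)$ and $\mathrm{BS}(m,m+2,A)$.

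For $X=\mathrm{BS}(m,A)$, the existence of a very ample line bundle $\mathcal{L}$ with $\mathcal{L}\otimes_k E\simeq\mathcal{O}_{\mathbb{F}_E}(r)$ for some $r>0$ is already recorded in the paragraph preceding Proposition 1: here $\mathbb{F}_E=\mathrm{Grass}_E(m,n)$, $\mathcal{O}_{\mathbb{F}_E}(1)\simeq\mathrm{det}(\mathcal{Q})$, and the positive generator of $\mathrm{Pic}(X)\simeq\mathbb{Z}$ does the job. Since $\mathrm{Pic}(X)$ is cyclic, every very ample line bundle on $X$ is a positive power $\mathcal{L}^{\otimes e}$ of that generator, so that $(X,(\mathcal{L}^{\otimes e})^{\otimes d})=(X,\mathcal{L}^{\otimes ed})$; hence the clause ``for every such very ample bundle'' follows immediately from Proposition 1, which provides an Ulrich bundle for $(X,\mathcal{L}^{\otimes d})$ for all $d\geq 1$.

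For $X$ among the five partial twisted flags listed, the existence of very ample line bundles $\mathcal{L}$ with $\mathcal{L}\otimes_k E\simeq\mathcal{O}_{\mathbb{F}_E}(r)$ is Lemma 4, and the assertion that $(X,\mathcal{L}^{\otimes d})$ carries an Ulrich bundle for all $d\geq 1$ and for every such $\mathcal{L}$ is exactly the content of Proposition 6. Concatenating the two cases then yields the theorem.

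Since the theorem is a consolidation, I expect no genuine obstacle: the substantive input has already been made in Proposition 1 and Proposition 6, which themselves rest on the split-case constructions of \cite{CMR} and \cite{CHW}, on the finiteness and surjectivity of the projection $\pi\colon X\otimes_k E\to X$, on flat base change for cohomology along $\mathrm{Spec}\,E\to\mathrm{Spec}\,k$, and on \cite{B}, Corollary 3.2 together with \cite{B}, (3.6). The only point worth spelling out is the reduction of an arbitrary very ample $\mathcal{L}$ to a power of a fixed line bundle, which in each case is immediate from the description of the ample cone recalled before Lemma 4.
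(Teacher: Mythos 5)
Your proposal is correct and matches the paper's (implicit) argument: the theorem is stated without a separate proof precisely because it is the concatenation of Proposition 1 (the case $\mathrm{BS}(m,A)$) and Proposition 5 (the five partial twisted flags, which you call Proposition 6), and your extra remark that any very ample bundle on $\mathrm{BS}(m,A)$ is a power of the generator of $\mathrm{Pic}\simeq\mathbb{Z}$ is a harmless, correct refinement of what the paper leaves unstated.
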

\begin{prop}
Let $X$ be a smooth projective geometrically integral variety over a field $k$. Assume $X\otimes_k k^{sep}$ is embedded into projective space via $\mathcal{O}_{X\otimes_k k^{sep}}(1)$, i.e $\mathcal{O}_{X\otimes_k k^{sep}}(1)=i^*\mathcal{O}_{\mathbb{P}^N}(1)$ for an embedding $i\colon X\otimes_k k^{sep}\rightarrow \mathbb{P}^N$. Then there is a very ample line bundle $\mathcal{L}$ on $X$ satisfying $\mathcal{L}\otimes_k k^{sep}\simeq \mathcal{O}_{X\otimes_k k^{sep}}(r)$ for a suitable $r>0$.  
\end{prop}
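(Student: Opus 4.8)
The plan is to descend a suitable power $\mathcal{O}(r)=\mathcal{O}(1)^{\otimes r}$ of the polarization from $X_s:=X\otimes_k k^{sep}$ to $X$ by Galois descent of line bundles, exploiting that the obstruction to such a descent lives in $\mathrm{Br}(k)$ and is therefore torsion, and then to observe that very ampleness is preserved under this descent. Throughout write $\Gamma=\mathrm{Gal}(k^{sep}/k)$ and $\mathcal{O}(1)=\mathcal{O}_{X_s}(1)$. Being a line bundle on a variety of finite type over $k$, $\mathcal{O}(1)$ is already defined over a finite Galois subextension $E/k$, so $\Gamma$ acts on $\mathrm{Pic}(X_s)$ through $\mathrm{Gal}(E/k)$ and the orbit of $\mathcal{O}(1)$ is finite. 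The first point to settle is that the class of $\mathcal{O}(1)$ is $\Gamma$-invariant: this holds in the situations in which we want to apply the statement --- if $\mathrm{Pic}(X_s)$ has rank one the $\Gamma$-action preserves the ample cone and hence fixes the positive generator, and for the split flags $\mathbb{F}_{k^{sep}}$ occurring above the polarization $\mathcal{O}_{\mathbb{F}_{k^{sep}}}(1)=\mathcal{L}_1\otimes\cdots\otimes\mathcal{L}_l$ is the sum of all the generators and so is fixed by the permutation action of $\Gamma$ --- and, failing that, one simply replaces $\mathcal{O}(1)$ by the tensor product of its finitely many conjugates.

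Granting $\mathcal{O}(1)\in\mathrm{Pic}(X_s)^{\Gamma}$, I would next invoke the low-degree exact sequence of the Hochschild--Serre spectral sequence $H^p(\Gamma,H^q_{\mathrm{et}}(X_s,\mathbb{G}_m))\Rightarrow H^{p+q}_{\mathrm{et}}(X,\mathbb{G}_m)$. Since $X$ is proper and geometrically integral the global units of $X_s$ are just $(k^{sep})^{\times}$; together with Hilbert's Theorem 90 ($H^1(\Gamma,(k^{sep})^{\times})=0$) and $H^2(\Gamma,(k^{sep})^{\times})=\mathrm{Br}(k)$ this yields
\[
0\longrightarrow \mathrm{Pic}(X)\longrightarrow \mathrm{Pic}(X_s)^{\Gamma}\stackrel{\delta}{\longrightarrow}\mathrm{Br}(k).
\]
As $\mathrm{Br}(k)$ is a torsion group, $\delta(\mathcal{O}(1))$ has some finite order $r\ge 1$; since $\delta$ is a group homomorphism, $\delta(\mathcal{O}(r))=r\cdot\delta(\mathcal{O}(1))=0$, and $\mathcal{O}(r)$ is still $\Gamma$-invariant, so it lies in the image of $\mathrm{Pic}(X)$. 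Thus there is a line bundle $\mathcal{L}$ on $X$ with $\mathcal{L}\otimes_k k^{sep}\simeq\mathcal{O}(r)$.

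To finish, $\mathcal{L}$ is very ample on $X$: $\mathcal{L}\otimes_k k^{sep}\simeq\mathcal{O}(r)$ is a power of the very ample bundle $\mathcal{O}(1)$, hence very ample, and very ampleness descends along the faithfully flat morphism $\mathrm{Spec}(k^{sep})\to\mathrm{Spec}(k)$ --- by flat base change $H^0(X,\mathcal{L})\otimes_k k^{sep}\simeq H^0(X_s,\mathcal{O}(r))$, so the morphism to projective space attached to $|\mathcal{L}|$ base-changes to the closed immersion attached to $|\mathcal{O}(r)|$, and being a closed immersion descends along faithfully flat morphisms. Hence $\mathcal{L}$ is a very ample line bundle on $X$ with $\mathcal{L}\otimes_k k^{sep}\simeq\mathcal{O}_{X_s}(r)$, as required.

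The one genuinely substantive ingredient is the torsionness of $\mathrm{Br}(k)$ --- this is exactly what forces the passage to a \emph{power} of $\mathcal{O}(1)$ rather than $\mathcal{O}(1)$ itself --- while the only delicate bookkeeping is the $\Gamma$-invariance of the polarization, handled above. If one prefers to avoid the spectral sequence, the same conclusion can be reached by explicit descent: choose isomorphisms $\varphi_\sigma\colon\sigma^{*}\mathcal{O}(1)\xrightarrow{\sim}\mathcal{O}(1)$ over $E$, read off the failure of the cocycle relation $\varphi_{\sigma\tau}=\varphi_\tau\circ\tau^{*}\varphi_\sigma$ as a $2$-cocycle in $H^2(\mathrm{Gal}(E/k),E^{\times})$, use that this group is annihilated by $[E:k]$, and conclude that $\mathcal{O}(1)^{\otimes[E:k]}$ carries a descent datum and therefore descends to $X$.
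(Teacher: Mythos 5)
Your proposal is correct and takes essentially the same route as the paper: the low-degree exact sequence $0\to\mathrm{Pic}(X)\to\mathrm{Pic}(X\otimes_k k^{sep})^{G}\stackrel{\delta}\to\mathrm{Br}(k)$ coming from the Hochschild--Serre/Leray spectral sequence, torsionness of $\mathrm{Br}(k)$ to kill the obstruction after passing to a power $\mathcal{O}(r)$, and descent of very ampleness via flat base change. You are in fact more careful than the paper's own proof, which tacitly takes $\mathcal{O}_{X\otimes_k k^{sep}}(m)$ to lie in $\mathrm{Pic}(X\otimes_k k^{sep})^{G}$ without comment; the only caveat is that your fallback of replacing $\mathcal{O}(1)$ by the tensor product of its Galois conjugates changes the polarization and so would not literally produce $\mathcal{L}\otimes_k k^{sep}\simeq\mathcal{O}_{X\otimes_k k^{sep}}(r)$, but in the intended applications the invariance of the class of $\mathcal{O}(1)$ holds exactly as you argue.
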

\begin{proof}
Let $G$ be the absolute Galois group. It is well know that there is an exact sequence arising from the Leray spectral sequence 
\begin{eqnarray*}
0\longrightarrow \mathrm{Pic}(X)\longrightarrow \mathrm{Pic}(X\otimes_k k^{sep})^G\stackrel{\delta}\longrightarrow \mathrm{Br}(k)\longrightarrow \mathrm{Br}(k(X)).
\end{eqnarray*}
Note that every element of $\mathrm{Br}(k)$ has finite order. Now let $\mathcal{O}_{X\otimes_k k^{sep}}(m)$ with $m>0$ be an element of the cokernel of $\mathrm{Pic}(X)\rightarrow \mathrm{Pic}(X\otimes_k k^{sep})^G$. If this element is trivial, we are done. Assume $\mathcal{O}_{X\otimes_k k^{sep}}(m)$ is non-trivial. Then $\delta(\mathcal{O}_{X\otimes_k k^{sep}}(m))$ is a non-trivial Brauer-equivalence class $[B]\in\mathrm{Br}(k)$. If $d>0$ is the order of $[B]$ in $\mathrm{Br}(k)$, we obtain that $\delta(\mathcal{O}_{X\otimes_k k^{sep}}(md))=[k]$. This implies that there exists a line bundle $\mathcal{L}$ on $X$ such that $\mathcal{L}\otimes_k k^{sep}\simeq\mathcal{O}_{X\otimes_k k^{sep}}(md)$. Since $\mathcal{O}_{X\otimes_k k^{sep}}(md)$ is very ample, we conclude that $\mathcal{L}$ must be very ample. Therefore, there is a very ample line bundle $\mathcal{L}$ on $X$ satisfying $\mathcal{L}\otimes_k k^{sep}\simeq \mathcal{O}_{X\otimes_k k^{sep}}(r)$ for a suitable $r>0$.
\end{proof}
Let $Y$ be one of the following flags: symplectic Grassmannians $\mathrm{IGrass}_{k^{sep}}(2,2n)$ for $n\geq 2$, odd and even-dimensional quadrics, orthogonal Grassmannians $\mathrm{OGras}_{k^{sep}}(2,m)$ for $m\geq 4$, $\mathrm{OGrass}_{k^{sep}}(3,4q+6)$ for $q\geq 0$ and $\mathrm{OGrass}_{k^{sep}}(4,8)$. Then $\mathrm{Pic}(Y)\simeq \mathbb{Z}$ is generated by a very ample line bundle (see \cite{FO}, Section 2). We will denote this very ample generator by $\mathcal{O}_Y(1)$. For inner twisted forms of $Y$, we refer to \cite{MPW}, Section 5.
\begin{thm}[Type $B_n, C_n$ and $D_n$]
Let $X$ be an inner twisted form of $Y$ from above. Then there are very ample line bundles $\mathcal{L}$ on $X$ satisfying $\mathcal{L}\otimes_k k^{sep}\simeq \mathcal{O}_{Y}(r)$ for suitable $r>0$. Moreover, for every such very ample bundle $(X,\mathcal{L}^{\otimes d})$ carries an Ulrich bundle for all $d\geq 1$.
\end{thm}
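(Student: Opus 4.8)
The plan is to run the descent argument of Proposition 5 once more, making two substitutions: the polarization is descended by Proposition 7 in place of Lemma 4, and the input existence statement \cite{CHW} for type-$A$ partial flags is replaced by the existence statement \cite{FO} for the isotropic Grassmannians and quadrics on the list. Thus the proof breaks into a ``descent of the very ample line bundle'' step and a ``descent, then pushforward, of the Ulrich bundle'' step.

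First I would apply Proposition 7. Since $X\otimes_k k^{sep}\simeq Y$ is embedded into projective space via $\mathcal{O}_Y(1)$, Proposition 7 produces a very ample line bundle $\mathcal{L}$ on $X$ with $\mathcal{L}\otimes_k k^{sep}\simeq\mathcal{O}_Y(r)$ for some $r>0$; this is the first assertion. For the remainder, fix a finite Galois extension $E/k$ splitting $X$ (this holds for any twisted form; for an \emph{inner} form it may be taken to be the extension splitting the algebra with involution, respectively the quadratic form, classifying $X$, see \cite{MPW}, Section 5), chosen inside $k^{sep}$. Then $X\otimes_k E$ is the split flag $Y$ over $E$, carrying its tautological sub- and quotient bundles, $\mathrm{Pic}(X\otimes_k E)\simeq\mathbb{Z}$ with generator $\mathcal{O}_{X\otimes_k E}(1)$ and $\mathcal{L}\otimes_k E\simeq\mathcal{O}_{X\otimes_k E}(r)$, and the projection $\pi\colon X\otimes_k E\to X$ is finite and surjective.

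Next, by \cite{FO} there is an Ulrich bundle $\mathcal{F}$ on $X\otimes_k k^{sep}\simeq Y$ with respect to $\mathcal{O}_Y(1)$, which, as in Propositions 1 and 5, is built from the natural homogeneous bundles on $Y$ (Schur functors of the tautological bundles, resp.\ spinor bundles in the quadric case). These bundles are already defined over $E$ and the relevant functors commute with base change, so $\mathcal{F}$ descends to a bundle $\mathcal{F}'$ on $X\otimes_k E$; flat base change of cohomology to $k^{sep}$ then gives $H^i(X\otimes_k E,\mathcal{F}'(-l))=0$ for all $i\in\mathbb{Z}$ and $1\leq l\leq\mathrm{dim}(X)$, i.e.\ $\mathcal{F}'$ is Ulrich for $(X\otimes_k E,\mathcal{O}_{X\otimes_k E}(1))$. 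By \cite{B}, Corollary 3.2 there is then an Ulrich bundle $\mathcal{E}$ for $(X\otimes_k E,\mathcal{O}_{X\otimes_k E}(rd))$ for every $d\geq 1$, and since $\mathcal{O}_{X\otimes_k E}(rd)\simeq\pi^*(\mathcal{L}^{\otimes d})$, \cite{B}, (3.6) shows that $\pi_*\mathcal{E}$ is an Ulrich bundle for $(X,\mathcal{L}^{\otimes d})$.

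The step I expect to be the main obstacle is the descent of $\mathcal{F}$ from $k^{sep}$ to the finite extension $E$: one must know that the Ulrich bundle \cite{FO} produces on the geometric flag is defined over $E$. This is exactly where the inner hypothesis enters, since then $X\otimes_k E$ is literally the split flag $Y$ over $E$ together with all of its homogeneous bundles, so the explicit description of $\mathcal{F}$ makes sense already over $E$ and base-changes back to $\mathcal{F}$ over $k^{sep}$. Everything else — finiteness and surjectivity of $\pi$, and reconciling the exponent $r$ coming from Proposition 7 with whatever divisibility \cite{B}, (3.6) requires, which is handled by replacing $\mathcal{L}$ with a suitable power — is routine and identical to the arguments already carried out for Propositions 1 and 5.
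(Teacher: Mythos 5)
Your proposal follows essentially the same route as the paper's proof: descend the polarization via Proposition 7, use \cite{FO} to get the explicit Ulrich bundle (a Schur power of the tautological bundle, resp.\ spinor bundles) on the split flag over $k^{sep}$, descend it to the finite splitting field $E$ (which exists because $X$ is an inner form), check the Ulrich vanishing by flat base change, pass to powers via \cite{B}, Corollary 3.2, and push forward along the finite surjective projection $\pi\colon X\otimes_k E\to X$ using \cite{B}, (3.6). The argument is correct and matches the paper's, which treats $\mathrm{IGrass}(2,2n)$ explicitly and declares the remaining cases analogous.
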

\begin{proof}
The existence of very ample line bundles $\mathcal{L}$ satisfying the desired property follows from Proposition 7. Note that for every inner twisted flag from above, there is a finite field extension $E$ of $k$ such that $X\otimes_k E$ is isomorphic to the corresponding flag. We prove the statement only for the case $X=\mathrm{IGrass}_{k^{sep}}(2,2n)$ for $n\geq 2$, because the proof for the remaining cases is analogous. So let $E$ be a finite splitting field contained in $k^{sep}$, i.e $X\otimes_k E\simeq \mathrm{IGrass}_E(2,2n)$. The main theorem of \cite{FO} states that the listed flags carry Ulrich bundles over $k^{sep}$ with respect to $\mathcal{O}_Y(1)$. In particular, $Y:=\mathrm{IGrass}_{k^{sep}}(2,2n)$ carries an Ulrich bundle $\mathcal{E}$ with respect to $\mathcal{O}_Y(1)$ which is given as $\Sigma^{(2n-3,0,...,0)}(\mathcal{U}^{\vee})$ with $\mathcal{U}$ being the tautological subbundle (see \cite{FO}, Corollary 3.6). This vector bundle can also defined over $\mathrm{IGrass}_E(2,2n)$, i.e there is a vector bundle $\mathcal{E'}$ on $\mathrm{IGrass}_E(2,2n)$ such that $\mathcal{E}'\otimes_E k^{sep}\simeq \mathcal{E}$. We argue as in Proposition 1 to conclude that $\mathcal{E'}$ is an Ulrich bundle on $\mathrm{IGrass}_E(2,2n)$ with respect to $\mathcal{O}_{\mathrm{IGrass}_E(2,2n)}(1)$. We use \cite{B}, Corollary 3.2 to see that $(\mathrm{IGrass}_E(2,2n),\mathcal{O}_{\mathrm{IGrass}_E(2,2n)}(d))$ carries an Ulrich bundle for all $d>1$. From Proposition 7 we can conclude that there is a very ample line bundle $\mathcal{L}$ on $X$ satisfying $\mathcal{L}\otimes_k E\simeq \mathcal{O}_{\mathrm{IGrass}_E(2,2n)}(r)$ for a suitable $r>0$. As in Propositions 1 and 5 we use the projection $\pi\colon X\otimes_k E\rightarrow X$, which is finite and surjective, to produce an Ulrich bundle for $(X,\mathcal{L}^{\otimes d})$ for all $d\geq 1$.
\end{proof}
Let $Y$ be one of the flags from above. Then $\mathrm{Pic}(Y)\simeq \mathbb{Z}$ is generated by the very ample line bundle $\mathcal{O}_Y(1)$. For inner twisted forms $X$ of $Y$, this implies $\mathrm{Pic}(X)\simeq \mathbb{Z}$. The ample generator $\mathcal{L}$ of $\mathrm{Pic}(X)$ obviously satisfies $\mathcal{L}\otimes_k k^{sep}\simeq \mathcal{O}_Y(r)$ for a suitable $r>0$. Moreover, the ample generator of $\mathrm{Pic}(X)$ is also very ample. 
\begin{cor}
Let $X$ be as in Theorem 8 and denote by $\mathcal{L}$ the very ample generator of $\mathrm{Pic}(X)$. Then $(X,\mathcal{L}^{\otimes d})$ carries an Ulrich bundle for all $d\geq 1$. 
\end{cor}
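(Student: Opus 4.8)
The plan is to reduce Corollary 9 to Theorem 8 by showing that the very ample generator $\mathcal{L}$ of $\mathrm{Pic}(X)$ is a special case of the very ample line bundles already handled there. First I would recall the structural remark immediately preceding the corollary: since $\mathrm{Pic}(Y)\simeq\mathbb{Z}$ is generated by $\mathcal{O}_Y(1)$ and $X$ is an inner twisted form of $Y$, the exact sequence from the Leray spectral sequence used in Proposition 7,
\[
0\longrightarrow \mathrm{Pic}(X)\longrightarrow \mathrm{Pic}(X\otimes_k k^{sep})^G\stackrel{\delta}\longrightarrow \mathrm{Br}(k),
\]
forces $\mathrm{Pic}(X)\simeq\mathbb{Z}$ as well, embedded in $\mathrm{Pic}(Y)\simeq\mathbb{Z}$ as the subgroup of classes killed by $\delta$. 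Hence the ample generator $\mathcal{L}$ of $\mathrm{Pic}(X)$ satisfies $\mathcal{L}\otimes_k k^{sep}\simeq\mathcal{O}_Y(r)$ for the smallest positive $r$ with $\mathcal{O}_Y(r)$ in the image of $\mathrm{Pic}(X)$, and since $\mathcal{O}_Y(r)$ is very ample, $\mathcal{L}$ is very ample.

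Next I would simply invoke Theorem 8 with this particular $\mathcal{L}$: the theorem asserts that \emph{for every} very ample line bundle on $X$ satisfying $\mathcal{L}\otimes_k k^{sep}\simeq\mathcal{O}_Y(r)$ for some $r>0$, the polarized variety $(X,\mathcal{L}^{\otimes d})$ carries an Ulrich bundle for all $d\geq 1$. The generator $\mathcal{L}$ of $\mathrm{Pic}(X)$ is such a bundle, so the conclusion is immediate. In other words, the corollary is the instance of Theorem 8 in which the very ample line bundle is chosen to be the positive generator of the (cyclic) Picard group rather than some higher power or product.

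There is essentially no obstacle here; the only point that needs a word of justification is that the generator $\mathcal{L}$ of $\mathrm{Pic}(X)$ really does base-change to a power of $\mathcal{O}_Y(1)$ and is itself very ample, and this is exactly the content of the paragraph preceding the statement (together with the observation that a line bundle whose base change to $k^{sep}$ is very ample is itself very ample, by faithfully flat descent of the closed immersion defined by its global sections). So the proof is a two-line deduction: establish $\mathrm{Pic}(X)\simeq\mathbb{Z}$ with very ample generator $\mathcal{L}$ satisfying the base-change hypothesis, then apply Theorem 8.
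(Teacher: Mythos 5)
Your proposal is correct and matches the paper's own reasoning: the paper proves the corollary implicitly via the paragraph preceding it (establishing $\mathrm{Pic}(X)\simeq\mathbb{Z}$ with very ample generator $\mathcal{L}$ satisfying $\mathcal{L}\otimes_k k^{sep}\simeq\mathcal{O}_Y(r)$) and then specializing Theorem 8 to this $\mathcal{L}$, exactly as you do. Your added justifications (the Leray sequence from Proposition 7 and descent of very ampleness) merely spell out what the paper labels as obvious.
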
  
%\begin{prop}
%Let $A$ and $X$ be as in Theorem 6 and let $\mathcal{L}$ be a very ample line bundle on $X$. Then there is no Ulrich line bundle for $(X,\mathcal{L}^{\otimes d})$.  
%\end{prop}
%\begin{proof}
%Assume there is an Ulrich line bundle $\mathcal{G}$ for $(X,\mathcal{L}^{\otimes d})$. After base change to a separable closure $k^{sep}$ we find that $\bar{\mathcal{G}}$ is an Ulrich line bundle for one of the flags. From \cite{CHW}, Proposition 2.1 we conclude that the degree of the flag is $>1$. But var 
%\end{proof}
\noindent{\small \textbf{Acknowledgement}. I thank Nikita Karpenko, Daniel Krashen and Kirill Zainoulline for answering questions concerning twisted flags. This research was conducted in the framework of the research training group GRK 2240: Algebro-geometric Methods in Algebra, Arithmetic and Topology, which is funded by the $\mathrm{DFG}$.

%\begin{thm2}[Characterization of $A^*$-invariant Ulrich bundles]
%Let $\mathrm{BS}(m,A)$ be a genaralized Brauer--Severi variety over a field $k$ of characteristic zero which is embedded into projective space via the Pl\"ucker embedding. Then $\mathrm{BS}(m,A)$ carries $A^*$-invariant Ulrich bundles. Moreover, the minimal rank of these bundles is given by 
%\begin{eqnarray*}
%\frac{\prod_{1\leq i<j\leq mn}{(j-i)(n^2-mn)}}{(mn)!(mn-1)!\cdots 2!}.
%\end{eqnarray*}
%\end{thm2}
%\begin{proof}

%\end{proof}

{\small MATHEMATISCHES INSTITUT, HEINRICH--HEINE--UNIVERSIT\"AT 40225 D\"USSELDORF, GERMANY}\\
E-mail adress: novakovic@math.uni-duesseldorf.de

\end{document}